\newtheorem{theorem}{Theorem} 
\newtheorem{lemma}[theorem]{Lemma}
\newcommand{\z}{\zeta}
\newcommand{\C}{\mathbb{C}}
\newcommand{\R}{\mathbb{R}}
\newcommand{\norm}[1]{\lVert#1\rVert}
\newcommand{\abs}[1]{\lvert#1\rvert}
\newcommand{\bigabs}[1]{\big\lvert#1\big\rvert}
\newcommand{\biggabs}[1]{\bigg\lvert#1\bigg\rvert}
\newcommand{\Biggabs}[1]{\Bigg\lvert#1\Bigg\rvert}
\begin{document}

\title[Polynomials with unimodular coefficients]{On a problem due to Littlewood concerning polynomials with unimodular coefficients}

\author{Kai-Uwe Schmidt}

\date{13 September 2012 (revised 12 February 2013)}

\subjclass[2010]{Primary: 42A05, 11B83; Secondary: 94A55}

\address{Faculty of Mathematics, Otto-von-Guericke University, Universit\"atsplatz~2, 39106 Magdeburg, Germany.}

\email{{\tt kaiuwe.schmidt@ovgu.de}}

\begin{abstract}
Littlewood raised the question of how slowly $\norm{f_n}_4^4-\norm{f_n}_2^4$ (where $\norm{.}_r$ denotes the $L^r$ norm on the unit circle) can grow for a sequence of polynomials $f_n$ with unimodular coefficients and increasing degree. The results of this paper are the following. For
\[
g_n(z)=\sum_{k=0}^{n-1}e^{\pi ik^2/n}\, z^k
\]
the limit of $(\norm{g_n}_4^4-\norm{g_n}_2^4)/\norm{g_n}_2^3$ is $2/\pi$, which resolves a mystery due to Littlewood. This is however not the best answer to Littlewood's question: for the polynomials
\[
h_n(z)=\sum_{j=0}^{n-1}\sum_{k=0}^{n-1} e^{2\pi ijk/n}\, z^{nj+k}
\]
the limit of $(\norm{h_n}_4^4-\norm{h_n}_2^4)/\norm{h_n}_2^3$ is shown to be~$4/\pi^2$. No sequence of polynomials with unimodular coefficients is known that gives a better answer to Littlewood's question. It is an open question as to whether such a sequence of polynomials exists.
\end{abstract}

\maketitle


\section{Introduction}

For real $r\ge 1$, the $L^r$ norm of a polynomial $f\in\C[z]$ on the unit circle is
\[
\norm{f}_r=\bigg(\frac{1}{2\pi}\int_0^{2\pi}\bigabs{f(e^{i\theta})}^r\,d\theta\bigg)^{1/r}.
\]
There is sustained interest in the $L^r$ norm of polynomials with restricted coefficients (see, for example, Littlewood~\cite{Littlewood1968}, Borwein~\cite{Borwein2002}, and Erd{\'e}lyi~\cite{Erdelyi2002} for surveys on selected problems). Littlewood raised the question of how slowly $\norm{f_n}_4^4-\norm{f_n}_2^4$ can grow for a sequence of polynomials $f_n$ with restricted coefficients and increasing degree. This problem is also of interest in the theory of communications, because $\norm{f}_4^4$ equals the sum of squares of the aperiodic autocorrelations of the sequence formed from the coefficients of $f$~\cite[p.~122]{Borwein2002}; in this context one considers the \emph{merit factor} $\norm{f}_2^4/(\norm{f}_4^4-\norm{f}_2^4)$. Much work on Littlewood's question has been done when the coefficients are $-1$ or $1$; see~\cite{Jedwab2011} for recent advances. In the situation where the coefficients are restricted to have unit magnitude, the polynomials 
\[
g_n(z)=\sum_{k=0}^{n-1}e^{\pi ik^2/n}\, z^k\quad\text{for integral $n\ge 1$}
\]
are of particular interest~\cite{Littlewood1961},~\cite{Littlewood1962},~\cite{Littlewood1966},~\cite{Littlewood1968}.\footnote{Some authors consider $g_n(e^{\pm\pi i/n}z)$, which however has the same $L^r$ norm as $g_n(z)$.} These polynomials are also the main ingredient in Kahane's celebrated semi-probabilistic construction of ultra-flat polynomials~\cite{Kahane1980}, which disproves a conjecture due to Erd\H{o}s~\cite{Erdos1962}. Write
\[
\alpha_n=\frac{\norm{g_n}_4^4-\norm{g_n}_2^4}{\norm{g_n}_2^3}
\]
(note that $\norm{f}_2=\sqrt{n}$ for every polynomial $f$ of degree $n-1$ with unimodular coefficients). Based on the work in~\cite{Littlewood1961} and~\cite{Littlewood1962} and calculations carried out by Swinnerton-Dyer, Littlewood concluded in~\cite{Littlewood1966} that 
\begin{equation}
\lim_{n\to\infty}\alpha_n=\sqrt{2}-\frac{2}{\pi}(\sqrt{2}-1)=1.15051\dots,   \label{eqn:littlewood_constant}
\end{equation}
but expressed doubt in his own conclusion. He knew that
\begin{equation}
0.604\le\alpha_n\le0.656\quad\text{for $18\le n\le 41$}   \label{eqn:numerical_data}
\end{equation}
and noted~\cite[Appendix]{Littlewood1966} ``There is a considerable mystery here. I have checked my calculations at least six times, and they have been checked also in great detail by Dr.~Flett.'' Littlewood raised this issue again in his book~\cite[p.~27]{Littlewood1968} and asked for a resolution of this puzzle.
\par
Borwein and Choi~\cite{Borwein2000} conjectured 
\[
\norm{g_n}_4^4=n^2+\frac{2}{\pi}n^{3/2}+\delta_nn^{1/2}+O(n^{-1/2}),
\]
where $\delta_n=-2$ for $n\equiv 0,1\pmod 4$ and $\delta_n=1$ for $n\equiv 2,3\pmod 4$ (this was not stated explicitly as a conjecture in~\cite{Borwein2000}, but was confirmed by the authors~\cite{Borwein2012} to be a tentative conclusion based on numerical evidence). This conjecture implies in particular
\begin{equation}
\lim_{n\to\infty}\alpha_n=\frac{2}{\pi}=0.63661\dots.   \label{eqn:limit_alpaha_conjectured}
\end{equation}
\par
Independently, Antweiler and B\"omer~\cite{Antweiler1990} made observations similar to~\eqref{eqn:numerical_data}, while Sta\'nczak and Boche~\cite{Stanczak2000} and Mercer~\cite{Mercer2012} derived bounds for $\alpha_n$. 
In particular, Mercer~\cite{Mercer2012} showed that
\[
\limsup_{n\to\infty}\,\alpha_n<\frac{16}{3\pi^{3/2}}=0.95779\dots,
\]
and thereby confirming Littlewood's suspicion (although Mercer was apparently unaware of Littlewood's work). 
\par
We shall resolve Littlewood's puzzle by proving that~\eqref{eqn:littlewood_constant} is incorrect and the conjecture~\eqref{eqn:limit_alpaha_conjectured} is true.
\begin{theorem}
\label{thm:main1}
We have
\[
\lim_{n\to\infty}\frac{\norm{g_n}_4^4-\norm{g_n}_2^4}{\norm{g_n}_2^3}=\frac{2}{\pi}.
\]
\end{theorem}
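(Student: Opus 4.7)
The plan is to evaluate $\norm{g_n}_4^4$ via the Parseval identity for aperiodic autocorrelations, to compute those autocorrelations explicitly, and then to extract the asymptotics by a Riemann-sum approximation. Setting $c_k=e^{\pi ik^2/n}$ and expanding $\bigabs{g_n(e^{i\theta})}^2=\sum_u A_u e^{iu\theta}$ with $A_u=\sum_k c_{k+u}\overline{c_k}$, Parseval gives
\[
\norm{g_n}_4^4=\sum_{u=-(n-1)}^{n-1}\abs{A_u}^2=n^2+2\sum_{u=1}^{n-1}\abs{A_u}^2,
\]
while a geometric-series computation (using $(k+u)^2-k^2=2ku+u^2$) yields
\[
\abs{A_u}^2=\frac{\sin^2(\pi u^2/n)}{\sin^2(\pi u/n)}\qquad\text{for }1\le u\le n-1.
\]
Since $\norm{g_n}_2^3=n^{3/2}$, the theorem is equivalent to $\sum_{u=1}^{n-1}\abs{A_u}^2=n^{3/2}/\pi+o(n^{3/2})$.

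For the asymptotic analysis I exploit the symmetry $\abs{A_{n-u}}^2=\abs{A_u}^2$ (immediate from the formula, since $\sin^2$ has period~$\pi$) to restrict to $1\le u\le n/2$ at the cost of a factor of two. In this range the Taylor expansion $\sin(\pi u/n)=(\pi u/n)(1+O((u/n)^2))$ holds uniformly, so $\abs{A_u}^2$ agrees with $n^2\sin^2(\pi u^2/n)/(\pi^2u^2)$ with a multiplicative error whose total contribution to the sum is $o(n^{3/2})$. Substituting $t=u/\sqrt{n}$, the resulting main term is a Riemann sum of mesh $1/\sqrt{n}$ asymptotic to
\[
\frac{2n^{3/2}}{\pi^2}\int_0^\infty\frac{\sin^2(\pi t^2)}{t^2}\,dt.
\]
The integral equals $\pi/2$: integration by parts, with both boundary terms vanishing, reduces it to $2\pi\int_0^\infty\sin(2\pi t^2)\,dt$, and the Fresnel identity $\int_0^\infty\sin(at^2)\,dt=\tfrac{1}{2}\sqrt{\pi/(2a)}$ at $a=2\pi$ gives the value $1/4$. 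Combining these steps yields $\sum_{u=1}^{n-1}\abs{A_u}^2\sim n^{3/2}/\pi$, and hence $\norm{g_n}_4^4-n^2\sim 2n^{3/2}/\pi$, which is what is needed.

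The hard part is justifying the Riemann-sum approximation rigorously, because $\sin^2(\pi t^2)/t^2$ has slowly decaying oscillations whose total variation on $[0,\sqrt{n}/2]$ grows with~$n$, so the usual Euler--Maclaurin bound does not suffice. My plan to handle this is to write $\sin^2(\pi u^2/n)=\tfrac{1}{2}-\tfrac{1}{2}\cos(2\pi u^2/n)$, evaluate the non-oscillatory piece exactly via the classical identity $\sum_{u=1}^{n-1}\csc^2(\pi u/n)=(n^2-1)/3$, and control the oscillatory piece $\sum_u\cos(2\pi u^2/n)/\sin^2(\pi u/n)$ either by Poisson summation (which converts it into a dual sum of Fresnel-type integrals whose leading asymptotic supplies the required $n^{3/2}/\pi$ correction) or by partial summation against Weyl-type bounds on incomplete quadratic Gauss sums $\sum_{u\le U}e^{2\pi iu^2/n}$.
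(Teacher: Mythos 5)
Your main computation is, in substance, the paper's own proof: the Parseval identity, the evaluation $\abs{A_u}=\abs{\sin(\pi u^2/n)/\sin(\pi u/n)}$, the symmetry $u\mapsto n-u$, the replacement of $\sin(\pi u/n)$ by $\pi u/n$ with total error $O(n)$, and the reduction to a Fresnel-type integral are exactly the steps taken there, and your constants check out (your value $\int_0^\infty \sin^2(\pi t^2)\,t^{-2}\,dt=\pi/2$ is the substitution $y=\pi t^2$ away from the paper's $\int_0^\infty \sin^2(y)\,y^{-3/2}\,dy=\sqrt{\pi}$). The genuine gap is the step you yourself flag as unjustified, the passage from the sum to the integral; and your reason for distrusting the direct route is mistaken, while the workaround you sketch would not go through as described. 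The total variation of $\phi(t)=\sin^2(\pi t^2)/t^2$ on $[0,\sqrt{n}/2]$ is only $O(\log n)$: $\phi$ is smooth and bounded near $0$, and for $t\ge1$ one has $\abs{\phi'(t)}\le 2\pi/t+2/t^3$, so $\int_0^{\sqrt{n}/2}\abs{\phi'(t)}\,dt=O(\log n)$. Hence the crude first-order Euler--Maclaurin (equivalently, bounded-variation) bound already gives $\sum_{1\le u\le n/2}\phi(u/\sqrt{n})=\sqrt{n}\int_0^{\sqrt{n}/2}\phi(t)\,dt+O(\log n)$, and after the prefactor $n/\pi^2$ the error is $O(n\log n)=o(n^{3/2})$. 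The paper does the same thing in the unscaled variable: $r(x)=\big(\sin(\pi x^2/n)\big)^2\,(n/(\pi x))^2$ has second derivative bounded by an absolute constant, so the Euler--Maclaurin error is $O(n)$. So no new machinery is needed; you only have to carry out this one elementary estimate.

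By contrast, the fallback plan has a real defect. Writing $\sin^2(\pi u^2/n)=\tfrac{1}{2}-\tfrac{1}{2}\cos(2\pi u^2/n)$ and using $\sum_{u=1}^{n-1}\csc^2(\pi u/n)=(n^2-1)/3$, the non-oscillatory piece is $\sim n^2/6$, while the whole sum is $\sim n^{3/2}/\pi$; hence $T=\sum_{u=1}^{n-1}\cos(2\pi u^2/n)\csc^2(\pi u/n)$ is not a remainder to be ``controlled'' but must be evaluated to the precision $T=(n^2-1)/3-(2/\pi)n^{3/2}+o(n^{3/2})$. Partial summation against square-root-cancellation bounds for incomplete quadratic Gauss sums cannot produce this: the weight $\csc^2(\pi u/n)$ has total variation of order $n^2$, so that route yields only bounds of order $n^{5/2}$, and for $u\ll\sqrt{n}$ the phase $2\pi u^2/n$ does not oscillate at all --- that range alone accounts for the bulk of the $n^2$ term. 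A Poisson-summation treatment would have to recover both the $n^2$ main term and the $n^{3/2}$ secondary term exactly, with the constant $2/\pi$ arising from the transition range $u\asymp\sqrt{n}$; in other words, it amounts to redoing precisely the sum-to-integral analysis you were trying to avoid. Drop the fallback and justify the Riemann sum directly as above.
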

\par
We shall also show that this is not the best possible answer to Littlewood's question. To do so, we consider the polynomials
\[
h_n(z)=\sum_{j=0}^{n-1}\sum_{k=0}^{n-1}e^{2\pi ijk/n}\,z^{nj+k}\quad\text{for integral $n\ge 1$}
\]
of degree $n^2-1$, which have been studied by Turyn~\cite{Turyn1967}, among others. 
\begin{theorem}
\label{thm:main2}
We have
\[
\lim_{n\to\infty}\frac{\norm{h_n}_4^4-\norm{h_n}_2^4}{\norm{h_n}_2^3}=\frac{4}{\pi^2}.
\]
\end{theorem}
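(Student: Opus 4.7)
The starting point is the standard identity
\[
\norm{h_n}_4^4-\norm{h_n}_2^4 = 2\sum_{\ell=1}^{n^2-1}\abs{a_\ell}^2,
\]
where $a_\ell=\sum_{u=\ell}^{n^2-1}c_u\,\overline{c_{u-\ell}}$ is the aperiodic autocorrelation at lag $\ell$ of the coefficient sequence $c_{nj+k}=e^{2\pi ijk/n}$. Since $\norm{h_n}_2^3=n^3$, the theorem reduces to proving $\sum_{\ell=1}^{n^2-1}\abs{a_\ell}^2=(2/\pi^2)\,n^3+o(n^3)$.

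I would parameterize every lag as $\ell=ns+t$ with $0\le s,t\le n-1$ and compute $a_{ns+t}$ in closed form. Each index $u=nj+k$ in the autocorrelation sum is handled by splitting according to whether $k\ge t$ (so $u-\ell=n(j-s)+(k-t)$) or $k<t$ (so $u-\ell=n(j-s-1)+(k-t+n)$); in each case the summand $c_u\overline{c_{u-\ell}}$ reduces to a pure character in $(j,k)$ and the inner sums are geometric series in $\z:=e^{2\pi i/n}$. Setting $F(s,t):=\bigabs{1-\z^{st}}^2/(1-\z^s)$, simplification using $\z^n=1$ yields for $1\le s\le n-2$ and $1\le t\le n-1$ the compact identity
\[
a_{ns+t}=\frac{F(s+1,t)-F(s,t)}{1-\z^t},
\]
exhibiting $a_{ns+t}$ as a discrete $s$-difference. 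The boundary cases are elementary: $a_{ns}=0$ for $s\ge 1$, while $\abs{a_t}^2=\abs{a_{n(n-1)+t}}^2=\sin^2(\pi t/n)/\sin^2(\pi/n)$ for $1\le t\le n-1$.

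The asymptotic evaluation of $\sum\abs{a_\ell}^2$ then splits into boundary and bulk pieces. From $s\in\{0,n-1\}$, using $\sum_{t=1}^{n-1}\sin^2(\pi t/n)=n/2$ and $\sin(\pi/n)\sim\pi/n$, one obtains a contribution $n^3/\pi^2+o(n^3)$. For the bulk terms ($1\le s\le n-2$), I would expand $\abs{F(s+1,t)-F(s,t)}^2$ into squared and cross pieces and evaluate the $t$-sum exactly using identities such as $\sum_{t=1}^{n-1}\sin^4(\pi st/n)/\sin^2(\pi t/n)=\min(s,n-s)\,n/2$ and $\sum_{s=1}^{n-1}\cot^2(\pi s/n)=(n-1)(n-2)/3$; the remaining $s$-sum should be dominated by the regions $s=O(1)$ and $n-1-s=O(1)$, where the sine factors expand and produce a convergent series summing to another $n^3/\pi^2+o(n^3)$. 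Combining the two pieces gives the desired $2n^3/\pi^2$.

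The main obstacle is that the expansion of $\abs{F(s+1,t)-F(s,t)}^2$ contains individually large pieces (of order up to $n^3\log n$ from the singularities of $1/(1-\z^s)$ near $s=0$ and $s=n$) that must cancel to yield the $\Theta(n^3)$ leading constant. The cross terms involve rapidly oscillating factors $\sin^2(\pi st/n)$ that do not form a Riemann sum in the natural variables $s/n,t/n$; they must instead be eliminated by exact trigonometric summation in $t$ before letting $s/n$ play the role of the continuous variable. Bookkeeping these cancellations, together with the contributions from the corners where $\sin(\pi s/n)$ or $\sin(\pi(s+1)/n)$ is small, is the technical heart of the argument.
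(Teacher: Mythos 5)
Your reduction and closed forms are correct (I checked the difference formula $a_{ns+t}=(F(s+1,t)-F(s,t))/(1-\z^t)$, the vanishing $a_{ns}=0$, the boundary rows, and both quoted trigonometric identities), and your route is genuinely different from the paper's: the paper fixes the residue $v$ and sums $\abs{c_{nu+v}}^2$ over the quotient $u$ first, where orthogonality in $u$ collapses each column to the exact value $2n\sum_{k=1}^v(\sin(\pi k/n)/\sin(\pi v/n))^2-n$ (Lemma~\ref{lem:norm_g}); no large cancelling pieces ever appear, and the constant comes from two rounds of Euler--Maclaurin plus $\int_0^{\pi/2}(y-\sin y\cos y)(\sin y)^{-2}\,dy=1$. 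You fix the quotient $s$ and sum over the residue $t$, and the one step you leave as an ``obstacle''---that the bulk contributes exactly $n^3/\pi^2$ after the $n^3\log n$ pieces cancel---is asserted rather than derived; it does, however, close along exactly the lines you indicate. Since the numerators of $F(s,t)$ and $F(s+1,t)$ are real, the only $t$-sums needed are Parseval evaluations, valid for $1\le s\le n/2-1$: $\sum_t\abs{1-\z^{st}}^4/\abs{1-\z^t}^2=2ns$, the analogous sum with $s+1$ equals $2n(s+1)$, and the cross sum $\sum_t\abs{1-\z^{st}}^2\abs{1-\z^{(s+1)t}}^2/\abs{1-\z^t}^2=2ns$; with $1/(1-\z^w)=\tfrac12(1+i\cot\tfrac{\pi w}{n})$ these give the exact row sum
\[
\sum_{t=1}^{n-1}\abs{a_{ns+t}}^2=\frac{n}{2}\bigg[1+s\Big(\cot\tfrac{\pi s}{n}-\cot\tfrac{\pi(s+1)}{n}\Big)^2+\cot^2\tfrac{\pi(s+1)}{n}\bigg]=\frac{n^3}{2\pi^2}\,\frac{1}{s(s+1)}+O(n),
\]
uniformly in this range, so the cancellation happens identically at each fixed $s$. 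Summing, using $\sum_{s\ge1}1/(s(s+1))=1$, the reflection $\abs{a_{n(n-1-s)+t}}=\abs{a_{ns+t}}$ (immediate from your formula) for $s>n/2$, and the trivial bound $\abs{a_{ns+t}}=O(1/\sin(\pi t/n))$ for the at most one middle row not covered, the bulk is $n^3/\pi^2+O(n^2)$; with your boundary term $n/\sin^2(\pi/n)=n^3/\pi^2+O(n)$ this yields $\sum_\ell\abs{a_\ell}^2=2n^3/\pi^2+O(n^2)$ and the theorem. The trade-off: the paper's order of summation avoids all cancellation bookkeeping and needs only one smooth Riemann-type analysis, while yours needs the exact Parseval identities but replaces the integral by an elementary convergent series.
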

This is the best known answer to Littlewood's question: there is no sequence of polynomials $f_n$ with unimodular coefficients for which the limit of $(\norm{f_n}_4^4-\norm{f_n}_2^4)/\norm{f_n}_2^3$ is known to be less than~$4/\pi^2$. It is an open question as to whether such a sequence of polynomials exists.
\par
In the radar literature~\cite[Ch.~6]{Levanon2004}, the sequences formed from the coefficients of $g_n$ and $h_n$ are called \emph{Chu} and \emph{Frank sequences}, respectively. Our results show that their merit factors grow like $(\pi/2)\sqrt{n}$ and $(\pi^2/4)\sqrt{n}$, respectively, which explains numerical results reported in~\cite{Antweiler1990}.


\section{Proof of Theorem~\ref{thm:main1}}

We begin with summarising known results (see \cite[p.~371]{Littlewood1966}, for example). For a polynomial $f\in\C[z]$ with $f(z)=\sum_{k=0}^{d-1}a_kz^k$, we readily verify that
\[
f(z)\overline{f(z^{-1})}=\sum_{u=-(d-1)}^{d-1}c_uz^u,
\]
where
\begin{equation}
c_u=\sum_{0\le j,j+u<d} a_j\overline{a_{j+u}}.   \label{eqn:f_ck}
\end{equation}
The numbers $c_u$ satisfy $c_u=\overline{c_{-u}}$. Hence
\begin{equation}
\norm{f}_4^4=\frac{1}{2\pi}\int_0^{2\pi}\Big(f(e^{i\theta})\overline{f(e^{i\theta})}\Big)^2\, d\theta=c_0^2+2\sum_{u=1}^{d-1}\abs{c_u}^2.   \label{eqn:norm_corr}
\end{equation}
\begin{lemma}
\label{lem:sum_of_squares_1}
For each $n\ge 1$, we have
\begin{equation}
\norm{g_n}_4^4=n^2-\epsilon_n+4\sum_{1\le u\le n/2}\bigg(\frac{\sin (\pi u^2/n)}{\sin (\pi u/n)}\bigg)^2,   \label{eqn:norm_f_n}
\end{equation}
where $\epsilon_n=2$ for $n\equiv 2\pmod 4$ and $\epsilon_n=0$ otherwise.
\end{lemma}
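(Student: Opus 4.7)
My plan is to apply the general identity \eqref{eqn:norm_corr} to $g_n$ and then use the geometric-series structure of the autocorrelations.

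First, I would set $a_k=e^{\pi i k^2/n}$, so that $c_0=n$. For $1\le u\le n-1$, expanding $(j+u)^2-j^2=2ju+u^2$ gives
\[
c_u=e^{-\pi i u^2/n}\sum_{j=0}^{n-1-u}e^{-2\pi iju/n}.
\]
Since $u\not\equiv 0\pmod{n}$, the inner sum is a nontrivial geometric progression with ratio $\omega=e^{-2\pi iu/n}$, and $\omega^{n-u}=e^{2\pi iu^2/n}e^{-2\pi iu}=e^{2\pi iu^2/n}$. Using $\abs{1-e^{i\theta}}=2\abs{\sin(\theta/2)}$ in numerator and denominator, I obtain
\[
\abs{c_u}^2=\biggabs{\frac{1-e^{2\pi iu^2/n}}{1-e^{-2\pi iu/n}}}^2=\bigg(\frac{\sin(\pi u^2/n)}{\sin(\pi u/n)}\bigg)^2.
\]
Plugging into \eqref{eqn:norm_corr} yields
\[
\norm{g_n}_4^4=n^2+2\sum_{u=1}^{n-1}\bigg(\frac{\sin(\pi u^2/n)}{\sin(\pi u/n)}\bigg)^2.
\]

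Next I would exploit the symmetry $u\mapsto n-u$. The identities $\sin(\pi(n-u)/n)=\sin(\pi u/n)$ and $\sin(\pi(n-u)^2/n)=\sin(\pi u^2/n-2\pi u+\pi n)=(-1)^n\sin(\pi u^2/n)$ show that the summand is invariant under $u\mapsto n-u$. Pairing up the terms collapses the sum over $1\le u\le n-1$ to twice the sum over $1\le u\le n/2$, except that for even $n$ the central term $u=n/2$ is counted once rather than twice. A short bookkeeping gives
\[
2\sum_{u=1}^{n-1}\bigg(\frac{\sin(\pi u^2/n)}{\sin(\pi u/n)}\bigg)^2=4\sum_{1\le u\le n/2}\bigg(\frac{\sin(\pi u^2/n)}{\sin(\pi u/n)}\bigg)^2-2\sin^2(\pi n/4),
\]
where the correction $2\sin^2(\pi n/4)$ is interpreted as $0$ when $n$ is odd (no middle term appears).

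Finally I would evaluate this correction: $\sin^2(\pi n/4)$ equals $0$ when $n\equiv 0\pmod 4$, equals $1$ when $n\equiv 2\pmod 4$, and is moot when $n$ is odd. This exactly matches the definition of $\epsilon_n$ in the statement, completing the proof. The only genuinely nontrivial step is the computation of $\abs{c_u}^2$; the rest is careful bookkeeping around the parity of $n$, which I expect to be the easiest place to slip up.
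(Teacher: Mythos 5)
Your proposal is correct and follows essentially the same route as the paper: compute $\abs{c_u}$ for $g_n$ explicitly via the geometric series (the paper calls this ``elementary manipulations''), insert into \eqref{eqn:norm_corr}, and fold the sum using $\abs{c_u}=\abs{c_{n-u}}$ with the middle term $u=n/2$ producing exactly $\epsilon_n$. Your parity bookkeeping, including $2\sin^2(\pi n/4)=\epsilon_n$, matches the paper's observation that $2\abs{c_{n/2}}=\epsilon_n$ for even $n$.
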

\begin{proof}
For $f=g_n$, elementary manipulations reveal that the numbers $c_u$ in~\eqref{eqn:f_ck} satisfy
\[
\abs{c_u}=\biggabs{\frac{\sin(\pi u^2/n)}{\sin(\pi u/n)}}
\]
for $1\le u\le n-1$. The desired result then follows from~\eqref{eqn:norm_corr} after noting that $c_0=n$ and $\abs{c_u}=\abs{c_{n-u}}$ for $1\le u\le n-1$ and $2\abs{c_{n/2}}=\epsilon_n$ for even $n$.
\end{proof}
\par
We now prove Theorem~\ref{thm:main1} by finding an asymptotic evaluation of the sum on the right hand side of~\eqref{eqn:norm_f_n}. 
\par
Let $x$ be a real number satisfying $0<x\le \pi/2$. From the inequality $x-x^3/6\le \sin x\le x$ we see that
\[
0<\frac{1}{(\sin x)^2}-\frac{1}{x^2}<1,
\]
and therefore
\[
\Biggabs{\sum_{1\le u\le n/2}\bigg(\frac{\sin (\pi u^2/n)}{\sin (\pi u/n)}\bigg)^2-\sum_{1\le u\le n/2}\bigg(\frac{\sin (\pi u^2/n)}{\pi u/n}\bigg)^2}<\frac{n}{2}.
\]
Thus, defining the function $r:\R\to\R$ by
\[
r(x)=\bigg(\frac{\sin (\pi x^2/n)}{\pi x/n}\bigg)^2,
\]
the theorem is proved by showing that
\begin{equation}
\lim_{n\to\infty}\frac{1}{n^{3/2}}\sum_{1\le u\le n/2}r(u)=\frac{1}{2\pi}.   \label{eqn:sine_replaced}
\end{equation}
It is consequence of the Euler-Maclaurin formula~\cite[Theorem~B.5]{Montgomery2007} that, for real numbers $a$ and $b$ with $a<b$, the expression
\[
\Biggabs{\sum_{a<u\le b}r(u)-\int_a^br(x)\,dx}
\]
is at most
\[
\frac{1}{2}\bigg(\abs{r(a)}+\abs{r(b)}\bigg)+\frac{1}{12}\bigg(\abs{r'(a)}+\abs{r'(b)}+\int_a^b\bigabs{r''(x)}\,dx\bigg).
\]
We take $b=n/2$ and let $a$ tend to zero. Elementary calculus shows that
\[
\abs{r(n/2)}\le \frac{4}{\pi^2},\quad \abs{r'(n/2)}\le \frac{8}{\pi}+\frac{16}{n\pi^2}, \quad\lim_{a\to 0}r(a)=\lim_{a\to 0}r'(a)=0,
\]
and $\abs{r''(x)}\le 34$ for all real $x$. Therefore
\[
\Biggabs{\sum_{1\le u\le n/2}r(u)-\int_0^{n/2}r(x)\,dx}\le \frac{2}{\pi^2}+\frac{2}{3\pi}+\frac{4}{3n\pi^2}+\frac{17n}{12},
\]
and so
\[
\lim_{n\to\infty}\frac{1}{n^{3/2}}\sum_{1\le u\le n/2}r(u)=\lim_{n\to\infty}\frac{1}{n^{3/2}}\int_0^{n/2}r(x)\,dx,
\]
provided that both limits exist. Substituting $y=\pi x^2/n$, we see that this last expression equals
\[
\lim_{n\to\infty}\frac{1}{2\pi^{3/2}}\int_0^{\pi n/4}\frac{(\sin y)^2}{y^{3/2}}\,dy=\frac{1}{2\pi^{3/2}}\int_0^\infty\frac{(\sin y)^2}{y^{3/2}}\,dy.
\]
This establishes~\eqref{eqn:sine_replaced}, and so completes the proof, since
\begin{equation}
\int_0^\infty\frac{(\sin y)^2}{y^{3/2}}\,dy
=\sqrt{\pi}   \label{eqn:integral}
\end{equation}
(see Gradshteyn and Ryzhik~\cite[3.823]{Gradshteyn2007}).
\par
For completeness, we sketch a proof of the identity~\eqref{eqn:integral}. To do so, we readily verify that
\[
\frac{\Gamma(3/2)}{y^{3/2}}=\int_0^\infty e^{-yt}\sqrt{t}\,dt\quad\text{for $y>0$},
\]
which together with $\Gamma(3/2)=\sqrt{\pi}/2$ yields
\[
\int_0^\infty\frac{(\sin y)^2}{y^{3/2}}\,dy=\frac{2}{\sqrt{\pi}}\int_0^\infty \int_0^\infty e^{-yt}\sqrt{t}\,(\sin y)^2 \, dt\,dy.
\]
Since the integrand on the right hand side is nonnegative, we can interchange the order of integration by Tonelli's theorem. The integral therefore equals
\[
\frac{2}{\sqrt{\pi}}\int_0^\infty \sqrt{t}\int_0^\infty e^{-yt}\,(\sin y)^2 \, dy\,dt=\frac{2}{\sqrt{\pi}}\int_0^\infty \frac{2\sqrt{t}}{t^3+4t}\,dt=\sqrt{\pi}.
\]
The inner integral on the left hand side is just the Laplace transform of $(\sin y)^2$, while the integral on the right hand side can be evaluated by first substituting $t=x^2$ (which makes the integrand rational) and then using standard techniques.


\section{Proof of Theorem~\ref{thm:main2}}

We begin with proving a counterpart of Lemma~\ref{lem:sum_of_squares_1} for the polynomials $h_n$.
\begin{lemma}
\label{lem:norm_g}
For each $n\ge1$, we have
\[
\norm{h_n}_4^4=n^4-\gamma_n+8n\sum_{1\le v\le n/2}\;\sum_{1\le k\le v}\;\bigg(\frac{\sin(\pi k/n)}{\sin(\pi v/n)}\bigg)^2,
\]
where
\[
\gamma_n=\begin{cases}
3n^2 & \quad\text{for even $n$}\\
2n^2-2n & \quad\text{for odd $n$}.
\end{cases}
\]
\end{lemma}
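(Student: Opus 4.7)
The plan is to compute each autocorrelation $c_u$ explicitly and then evaluate $\sum_u |c_u|^2$ via discrete Fourier analysis on $\mathbb{Z}/n\mathbb{Z}$.

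First, I would write $u = nv + w$ with $v, w \in \{0,\ldots,n-1\}$ and split the defining sum $c_u = \sum_j a_j \overline{a_{j+u}}$ into two parts according to whether the $k$-component of $j + u$ wraps past $n$. Each piece factors as a product of two geometric sums in $\zeta = e^{2\pi i/n}$, and after applying the identity $\sum_{j=0}^{n-v-1}\zeta^{-jw} = -\zeta^{vw}\sum_{j=0}^{v-1}\zeta^{-jw}$ (valid for $w \not\equiv 0 \bmod n$), the two pieces combine cleanly to yield $c_{nv} = 0$ for $v \ge 1$ and
\[
c_{nv+w} = \frac{e^{i\pi w/n}}{\sin(\pi w/n)}\bigl(g(v) - g(v+1)\bigr),\quad w \in \{1,\ldots,n-1\},
\]
where $g(v) = e^{i\pi v/n}\sin^2(\pi vw/n)/\sin(\pi v/n)$ with the convention $g(0) = g(n) = 0$.

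Next, I would exploit the decomposition
\[
g(v) = \frac{1}{2i}\Bigl(\sum_{m=1}^{w}\zeta^{vm} - \sum_{m=0}^{w-1}\zeta^{-vm}\Bigr),
\]
which follows from the standard geometric-sum formula. Viewing $g$ as a function on $\mathbb{Z}/n\mathbb{Z}$, orthogonality of characters immediately evaluates the discrete Fourier transforms of the two inner sums to $n\cdot\mathbf{1}_{k\in\{1,\ldots,w\}}$ and $n\cdot\mathbf{1}_{k\in\{0,n-w+1,\ldots,n-1\}}$, so $\widehat{g}(k) \in \{0,\pm n/(2i)\}$. Since the DFT of $v \mapsto g(v)-g(v+1)$ equals $(1-\zeta^k)\widehat{g}(k)$, Plancherel together with $|1-\zeta^k|^2 = 4\sin^2(\pi k/n)$ gives
\[
\sum_{v=0}^{n-1}\abs{g(v)-g(v+1)}^2 = n\Bigl[2\sum_{l=1}^{w^*-1}\sin^2(\pi l/n) + \sin^2(\pi w^*/n)\Bigr]
\]
with $w^* = \min(w, n-w)$.

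Finally, dividing by $\sin^2(\pi w/n) = \sin^2(\pi w^*/n)$ produces $\sum_v|c_{nv+w}|^2$; summing over $w \in \{1,\ldots,n-1\}$ via the pairing $w \leftrightarrow n-w$ (with $w = n/2$ handled separately when $n$ is even) and matching with the lemma's inner sum using $\sum_{k=1}^{v}\sin^2(\pi k/n)/\sin^2(\pi v/n) = 1 + \sum_{l=1}^{v-1}\sin^2(\pi l/n)/\sin^2(\pi v/n)$ converts $\norm{h_n}_4^4 = n^4 + 2\sum_{u\ge 1}|c_u|^2$ into the stated form. The value of $\gamma_n$ drops out of the bookkeeping: the constant $n$ contributed per shift, plus in the even case the unpaired $w = n/2$ contribution (for which $\sum_{l=1}^{n/2-1}\sin^2(\pi l/n) = (n-2)/4$). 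The main obstacle is step one: the Case A and Case B pieces of $c_u$ look asymmetric at first, but the geometric-sum massage reorganises them into consecutive values of a single function $g$, which is precisely what makes the Fourier-theoretic evaluation automatic.
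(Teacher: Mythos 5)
Your proposal is correct and is essentially the paper's own argument: after splitting the correlation by wrap-around into two products of geometric sums, the paper likewise writes each $\overline{c_{nu+v}}$ as a short character sum in the quotient index, namely $\bigl(\sum_{k=v}^{n-1}-\sum_{k=0}^{n-v-1}\bigr)\zeta^{ku}\,\frac{\zeta^k-1}{\zeta^v-1}$, applies Parseval over that index to get $\sum_u\abs{c_{nu+v}}^2=2n\sum_{k=1}^{v}\bigl(\sin(\pi k/n)/\sin(\pi v/n)\bigr)^2-n$, and then does the same pairing $v\leftrightarrow n-v$ with the $v=n/2$ case and the extension of the outer sum handled separately. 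Your intermediate closed sine form $c_{nv+w}=\frac{e^{i\pi w/n}}{\sin(\pi w/n)}\bigl(g(v)-g(v+1)\bigr)$ and the explicit DFT of $g$ (which I verified, along with the Plancherel step and the $\gamma_n$ bookkeeping for both parities) are a correct repackaging of that same computation rather than a different route.
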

\begin{proof}
Write $\z=e^{2\pi i/n}$. Then, for $f=h_n$, the numbers $c_u$ in~\eqref{eqn:f_ck} are given by (see also Turyn~\cite{Turyn1967})
\[
c_{nu+v}=\sum_{j=0}^{n-u-1}\sum_{k=0}^{n-v-1}\z^{jk-(j+u)(k+v)} + \sum_{j=0}^{n-u-2}\sum_{k=n-v}^{n-1}\z^{jk-(j+u+1)(k+v)}
\]
for $0\le u,v<n$. Rearrange and use $\sum_{k=0}^{n-1}\z^{k(u+1)}=0$ for $n\nmid u+1$ (note that the second term is zero for $u+1=n$) to see that
\begin{equation}
\overline{c_{nu+v}}=\z^{uv}\sum_{k=0}^{n-v-1}\z^{ku}\sum_{j=0}^{n-u-1}\z^{jv} - \z^{(u+1)v}\sum_{k=0}^{n-v-1}\z^{k(u+1)}\sum_{j=0}^{n-u-2}\z^{jv}   \label{eqn:c2}
\end{equation}
for $0\le u,v<n$. Evaluation of the sums over $j$ gives, for $0\le u<n$ and $0<v<n$,
\begin{align*}
\overline{c_{nu+v}}&=\frac{1}{\z^v-1}\sum_{k=0}^{n-v-1}\big(\z^{ku}(1-\z^{uv})-\z^{k(u+1)}(1-\z^{(u+1)v})\big)\\
&=\frac{1}{\z^v-1}\sum_{k=0}^{n-v-1}\big[\z^{(k+v)u}(\z^{k+v}-1)-\z^{ku}(\z^k-1)\big].
\end{align*}
We can write this as
\[
\bigg(\sum_{k=v}^{n-1}-\sum_{k=0}^{n-v-1}\bigg)\z^{ku}\;\frac{\z^k-1}{\z^v-1},
\]
from which we see that
\begin{equation}
\sum_{u=0}^{n-1}\abs{c_{nu+v}}^2=n\bigg(\sum_{k=v}^{n-1}+\sum_{k=0}^{n-v-1}-\sum_{k=v}^{n-v-1}-\sum_{k=v}^{n-v-1}\bigg)\biggabs{\frac{\z^k-1}{\z^v-1}}^2   \label{eqn:four_sums_over_c}
\end{equation}
for $0<v<n$. For $0<v<n/2$ all of these sums are nonempty, so that after grouping them together we have, for $0<v<n/2$,
\begin{align}
\sum_{u=0}^{n-1}\abs{c_{nu+v}}^2&=n\bigg(\sum_{k=n-v}^{n-1}+\sum_{k=0}^{v-1}\bigg)\biggabs{\frac{\z^k-1}{\z^v-1}}^2   \nonumber\\
&=2n\sum_{k=0}^v\;\biggabs{\frac{\z^k-1}{\z^v-1}}^2-n   \nonumber\\
&=2n\sum_{k=1}^v\;\bigg(\frac{\sin(\pi k/n)}{\sin(\pi v/n)}\bigg)^2-n.   \label{sum_c_over_u}
\end{align}
\par
Using~\eqref{eqn:c2} we readily verify that $c_{nu}=0$ for $u\ne 0$. Therefore, since $c_0=n^2$ trivially, we have from~\eqref{eqn:norm_corr}
\begin{equation}
\norm{h_n}_4^4=n^4+2\sum_{v=1}^{n-1}\;\sum_{u=0}^{n-1}\abs{c_{nu+v}}^2.   \label{eqn:norm_corr_turyn}
\end{equation}
We also have
\begin{equation}
c_{nu+v}=-\z^v\,c_{nu+n-v}\quad\text{for $(u,v)\ne(0,0)$},   \label{eqn:corr_symmetry}
\end{equation}
which also follows from~\eqref{eqn:c2} using the identities
\begin{align*}
\sum_{k=0}^{v-1}\z^{kw}&=-\z^{wv}\sum_{k=0}^{n-v-1}\z^{kw}\\
\intertext{for integers $w$ and $v$ satisfying $n\nmid w$ and $0\le v<n$ and}
\sum_{j=0}^{n-w-1}\z^{-jv}&=\z^{(w+1)v}\sum_{j=0}^{n-w-1}\z^{jv}
\end{align*}
for integers $w$ and $v$.
\par
Now, for odd $n$, we have from~\eqref{eqn:norm_corr_turyn} and~\eqref{eqn:corr_symmetry}
\[
\norm{h_n}_4^4=n^4+4\sum_{v=1}^{(n-1)/2}\;\sum_{u=0}^{n-1}\abs{c_{nu+v}}^2
\]
and the desired result follows from~\eqref{sum_c_over_u}. Similarly, for even $n$, we have
\[
\norm{h_n}_4^4=n^4+4\sum_{v=1}^{n/2-1}\;\sum_{u=0}^{n-1}\abs{c_{nu+v}}^2+2\sum_{u=0}^{n-1}\abs{c_{nu+n/2}}^2.
\]
Using~\eqref{eqn:four_sums_over_c}, we find that
\[
2\sum_{u=0}^{n-1}\abs{c_{nu+n/2}}^2=\frac{n}{2}\sum_{k=0}^{n-1}\abs{\z^k-1}^2=n^2,
\]
and therefore, by~\eqref{sum_c_over_u},
\[
\norm{h_n}_4^4=n^4-n^2+4n+8n\sum_{v=1}^{n/2-1}\sum_{k=1}^v\;\bigg(\frac{\sin(\pi k/n)}{\sin(\pi v/n)}\bigg)^2.
\]
To obtain the desired expression in the lemma for even $n$, we extend the summation over $v$ to $n/2$ and subtract the correction term
\[
8n\sum_{k=1}^{n/2}\big(\sin(\pi k/n)\big)^2=n\sum_{k=0}^{n-1}\abs{\z^k-1}^2+4n=2n^2+4n.   \qedhere
\]
\end{proof}
\par
In order to prove Theorem~\ref{thm:main2}, we invoke Lemma~\ref{lem:norm_g} and show that
\begin{equation}
8n\sum_{1\le v\le n/2}\;\sum_{1\le k\le v}\;\bigg(\frac{\sin(\pi k/n)}{\sin(\pi v/n)}\bigg)^2=\frac{4}{\pi^2}n^3+O(n^2).   \label{eqn:claim_g_norm}
\end{equation}
To do so, we make repeated use of the following elementary bound, which is also a simple consequence of the Euler-Maclaurin formula~\cite[Theorem~B.5]{Montgomery2007}. Let $r:\R\to\R$ be a differentiable function and let $a$ and $b$ be real numbers with $a<b$. Then
\begin{equation}
\Biggabs{\sum_{a<k\le b}r(k)-\int_a^br(x)\,dx}\le \frac{1}{2}\bigg(\abs{r(a)}+\abs{r(b)}+\int_a^b\bigabs{r'(x)}\,dx\bigg).   \label{eqn:EM_simple}
\end{equation}
We first take $r(x)=(\sin(\pi x/n))^2$ and $(a,b)=(0,v)$, so that for $1\le v\le n/2$, we have
\begin{align*}
\sum_{k=1}^v(\sin(\pi k/n))^2&=\int_0^v(\sin(\pi x/n))^2dx+O(1)\\
&=\frac{n}{\pi}\int_0^{\pi v/n}(\sin y)^2\,dy+O(1)\\[1.5ex]
&=\frac{n}{2\pi}\Big(\pi v/n-\sin(\pi v/n)\cos(\pi v/n)\Big)+O(1).
\end{align*}
Letting
\[
p(y)=\frac{y-\sin y\cos y}{(\sin y)^2},
\]
we then have
\begin{equation}
\sum_{1\le v\le n/2}\;\sum_{1\le k\le v}\;\bigg(\frac{\sin(\pi k/n)}{\sin(\pi v/n)}\bigg)^2=\frac{n}{2\pi}\sum_{1\le v\le n/2}p(\pi v/n)+O(n).   \label{eqn:inner_sum_int}
\end{equation}
We now apply~\eqref{eqn:EM_simple} with $r(x)=p(\pi x/n)$ and $b=n/2$ and let $a$ tend to zero. We have
\[
p'(y)=2-\frac{2(y-\sin y\cos y)\cos y}{(\sin y)^3}
\]
from which, using $x-x^3/6\le \sin x\le x$ and $1-x^2/2\le \cos x\le 1$ together with elementary calculus, we find that
\[
-3<p'(y)\le 2 \quad\text{for $0<y\le \pi/2$}.
\]
Hence $\abs{r'(x)}<3\pi/n$ for $0<x\le n/2$. Since we also have $r(n/2)=\pi/2$ and $\lim_{a\to0} r(a)=0$, we find from~\eqref{eqn:EM_simple} that~\eqref{eqn:inner_sum_int} equals
\[
\frac{n}{2\pi}\int_0^{n/2}p(\pi x/n)dx+O(n)=\frac{n^2}{2\pi^2}\int_0^{\pi/2}p(y)dy+O(n).
\]
The desired result~\eqref{eqn:claim_g_norm} is then established by showing that
\begin{equation}
\int_0^{\pi/2}p(y)dy=1.   \label{eqn:integral_one}
\end{equation}
By differentiation we readily verify that
\[
\int \frac{y-\sin y\cos y}{(\sin y)^2}\,dy=-\frac{y}{\tan y}+C
\]
for some arbitrary constant $C$ and~\eqref{eqn:integral_one} follows by application of l'H\^opital's rule.


%
\end{document}